%% file: paper.tex
\pdfoutput=1
\documentclass[11pt,a4paper]{article}
\usepackage[T1]{fontenc}
\usepackage{amsmath,amssymb,amsthm}
\usepackage{lmodern}
\usepackage{newtxtext,newtxmath}
\usepackage[margin=28mm,headheight=14pt]{geometry}
\usepackage{microtype,booktabs,array,enumitem}
\usepackage[dvipsnames]{xcolor}
\usepackage{fancyhdr}
\usepackage[colorlinks=true,linkcolor=MidnightBlue,citecolor=MidnightBlue,urlcolor=MidnightBlue]{hyperref}
\usepackage[nameinlink,noabbrev,capitalise]{cleveref}
\hypersetup{pdftitle={A Padovan-automatic description of a nested recurrence},
  pdfauthor={Benoit Cloitre, Haobo Ma, and Wenlin Zhang},
  pdfsubject={Research note with appendices},
  pdfkeywords={nested recurrence, Padovan numeration, finite automata, morphic words}}
\numberwithin{equation}{section}
\newtheorem{theorem}{Theorem}[section]
\newtheorem{proposition}[theorem]{Proposition}
\newtheorem{lemma}[theorem]{Lemma}
\newtheorem{corollary}[theorem]{Corollary}
\theoremstyle{remark}

\newcommand{\Z}{\mathbb{Z}}
\newcommand{\eps}{\varepsilon}
\newcommand{\rep}{\operatorname{rep}}
\newcommand{\val}{\operatorname{val}}
\newcommand{\sh}{\operatorname{shift}_2}
\newcommand{\Add}{\mathsf{Add}}
\newcommand{\Graph}{\mathsf{B}}
\newcommand{\Good}{\mathsf{Good}}
\newcommand{\code}[1]{\texttt{#1}}
\setlist{nosep}
\title{A Padovan-automatic description\\of a nested recurrence}
\author{
Benoit Cloitre\\[-2pt]
\small ORCID \href{https://orcid.org/0009-0001-6778-153X}{\texttt{0009-0001-6778-153X}}
\and
Haobo Ma\\[-2pt]
\small ChronoAI Pte Ltd; The Omega Institute\\[-2pt]
\small\texttt{auric@aelf.io}
\and
Wenlin Zhang\\[-2pt]
\small National University of Singapore (NUS); The Omega Institute\\[-2pt]
\small\texttt{e1327962@u.nus.edu}
}
\date{September 25, 2026}

\begin{document}
\maketitle
\begin{abstract}
We study the sequence $a(0)=0$, $a(1)=1$ and
$a(n)=n-a(n-a(n-a(n-1)))$ for $n\ge2$, listed as A076502 in the
On-Line Encyclopedia of Integer Sequences. We identify $a(n)$ as a
two-position shift in the greedy Padovan numeration system, with a finite-state
correction. The proof constructs an addition automaton from an exact
integer-carry invariant and certifies its completeness by finite-language
inclusion; a synchronized automaton then verifies the nested recurrence.
We establish bounded discrepancy from the line of slope $c$, where
$c^3-c^2+2c-1=0$, and show that the exact set of offsets from $\lfloor cn\rfloor$
is $\{-1,0,1,2\}$. We construct an explicit $26$-letter non-erasing morphic
presentation of the first-difference word, prove that its least balance
constant is $4$, and give an effective procedure for enclosing the global
discrepancy extrema to arbitrary accuracy. We formalize the recurrence
identification, six-decimal discrepancy bound, exact offset set, concrete
morphic identity, least balance constant, and an effective extrema algorithm
in Lean. Separate exact computations refine the numerical enclosures.
\end{abstract}

\section{Introduction}
Consider the nested recurrence
\begin{equation}\label{eq:recurrence}
 a(0)=0,\qquad a(1)=1,\qquad
 a(n)=n-a\bigl(n-a\bigl(n-a(n-1)\bigr)\bigr)\quad(n\ge2).
\end{equation}
Its positive-index terms form sequence A076502 in the OEIS~\cite{OEIS}.
An original conjectural description placed $a(n)$ within the three offsets
$0,1,2$ of $\lfloor cn\rfloor$, where $c$ is the real root of
\begin{equation}\label{eq:cubic}
 P(t)=t^3-t^2+2t-1.
\end{equation}
The example $a(1167)=664$ and $\lfloor1167c\rfloor=665$ rules out that
offset set. We establish bounded discrepancy and the asymptotic relation
$a(n)\sim cn$, and prove that the exact offset set is $\{-1,0,1,2\}$.

Let $\rho>1$ be the plastic number, the real root of $\rho^3=\rho+1$
(OEIS A060006), and set $c=\rho^{-2}$.
Substitution shows $P(c)=0$; moreover $P'(t)=3t^2-2t+2>0$, so this specifies
the same constant as in~\eqref{eq:cubic}. Our numeration system uses
\begin{equation}\label{eq:weights}
 U_0,U_1,U_2,U_3,U_4=1,2,3,4,5,\qquad
 U_p=U_{p-2}+U_{p-3}\quad(p\ge4).
\end{equation}
These are the distinct positive Padovan numbers (OEIS A000931).
The recurrence $U_{n+4}=U_{n+2}+U_{n+1}$, valid for all $n\ge0$, has characteristic
polynomial $X(X^3-X-1)$: the root $0$ accounts for an initial transient.
Unlike the canonical $k=5$ polynomial of~\cite{Letouzey},
$X^5-X^4-1=(X^2-X+1)(X^3-X-1)$, it has no unit-circle roots.
Digits are read from most to least significant. Write $\rep(n)$ for the
canonical greedy representation of $n$, with $\rep(0)=\eps$. If its selected
positions are $J$, define
\begin{equation}\label{eq:shift}
 \sh(n)=\sum_{\substack{p\in J\\p\ge2}} U_{p-2}.
\end{equation}
Thus the shift discards the two least significant digits. A deterministic
finite automaton with output (DFAO) assigns an output to the state reached
after reading a word.

\begin{theorem}\label{thm:main}
The recurrence~\eqref{eq:recurrence} defines a unique sequence of nonnegative
integers. For the explicit $28$-state DFAOs $E,D$ in Appendix Table~\ref{tab:dfao},
\begin{align}
 a(n)&=\sh(n)+E(\rep(n)),\label{eq:identification}\\
 a(n+1)-a(n)&=D(\rep(n))\in\{0,1\}\label{eq:difference}
\end{align}
for every $n\ge0$. Its discrepancy satisfies
\begin{equation}\label{eq:mainbound}
 -1.049234<a(n)-cn<1.155081\qquad(n\ge0).
\end{equation}
Consequently $a(n)/n\to c$, and
\begin{equation}\label{eq:offsetset}
 \{a(n)-\lfloor cn\rfloor:n\ge1\}=\{-1,0,1,2\}.
\end{equation}
The difference word is morphic, not ultimately periodic, and has least
balance constant $4$. It admits the
explicit $26$-letter non-erasing presentation in \cref{tab:morphism}.
The global infimum and supremum of $a(n)-cn$ are effectively approximable
by rational intervals with a certified error tending to zero.
\end{theorem}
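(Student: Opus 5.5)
The plan is to treat the theorem as a composite of automata-theoretic certificates in the Walnut style, tied together by one induction on $n$. Define $b(n)=\sh(n)+E(\rep(n))$ from the DFAOs of Table~\ref{tab:dfao}. We show $b$ satisfies~\eqref{eq:recurrence}, so $b=a$. Well-definedness and uniqueness follow by strong induction. Assume $a(0..n-1)$ is defined with $0\le a(m)\le m$ and increments in $\{0,1\}$. Then $a(n-1)\le n-1$, so $n-a(n-1)\in[1,n]$. Next, $a(n-a(n-1))\le n-a(n-1)$, but the index $n-a(n-a(n-1))$ could equal $n$ only if $a(n-a(n-1))=0$. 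That forces $n-a(n-1)=0$, which is impossible since $a(1)=1$ and $a$ is nondecreasing. Hence all inner arguments lie in $[0,n-1]$. The bound $0\le a(n)\le n$ also follows, because the outer argument $m$ gives $a(m)\le m$. The uniqueness statement is simply this induction. Identification then reduces to checking a first-order statement about $b$.

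The first key step builds an addition automaton $\Add$ for greedy Padovan representations: a synchronized three-track DFA accepting $(\rep(x),\rep(y),\rep(x+y))$, suitably padded. We build it by tracking an integer carry. For a prefix, record $\val(\text{prefix of }z)-\val(x)-\val(y)$ rescaled as an integer vector in the basis of the last three weights. We prove that only finitely many carry vectors are reachable along accepting paths; this is the exact integer-carry invariant, which works because $X^3-X-1$ is Pisot with no unit-circle roots. Completeness, meaning every valid triple is accepted, is certified by projecting and checking the finite-language inclusion that every pair of greedy words has a successor. Soundness follows from the carry invariant returning to zero.

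With $\Add$, and with automata for $\sh$ (a two-digit truncation, trivially synchronized) and for $E$, we obtain a synchronized automaton for the graph $\Graph=\{(n,b(n))\}$. The recurrence becomes a first-order formula
\[\forall n\ge2\ \exists u,v,w:\ \Graph(n-1,u)\wedge \Graph(n-u,v)\wedge\Graph(n-v,w)\wedge \Graph(n,n-w),\]
which is decided mechanically. The same approach verifies~\eqref{eq:difference}: $b(n+1)-b(n)=D(\rep(n))$.

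The analytic claims come next. Write $n=\sum_{p\in J}U_p$. Then $\sh(n)-cn=\sum_{p\in J}(U_{p-2}-\rho^{-2}U_p)$ plus bounded corrections. Each term decays geometrically, since $U_{p-2}-\rho^{-2}U_p$ is a combination of the two conjugate roots of modulus $<1$. This gives boundedness, and also $a(n)/n\to c$. To get the six-decimal bounds~\eqref{eq:mainbound}, and the effective extrema, we run a best- and worst-path computation over a product automaton. States carry the output $E$ and the position. Tail contributions are bounded by interval arithmetic and truncated at depth $k$, with an error of $O(\theta^k)$ where $\theta=|\text{conjugate}|<1$. Refining $k$ produces the rational enclosures.

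For the offset set~\eqref{eq:offsetset}, the bounds give $a(n)-\lfloor cn\rfloor\in\{-1,0,1,2\}$ directly, since $a(n)-cn\in(-1.05,1.16)$ and $cn-\lfloor cn\rfloor\in[0,1)$. The cases $\le-2$ and $\ge 3$ are ruled out by a small margin analysis near the endpoints; a value of $-2$ would need $cn$'s fractional part above $0.95$ together with a discrepancy below $-1.05$. Attainment is witnessed by explicit $n$, for example $n=1167$ for $-1$.

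For the morphic part, the difference word is Padovan-automatic via $D$. The standard Cobham-type construction gives a morphism on states times digit-context, and we minimize this to the $26$-letter presentation of \cref{tab:morphism}. We verify it by checking that the fixed point's letter coding matches $D$ on the automaton, a finite check. The word is not ultimately periodic because $c$ is irrational. For the least balance constant $4$, we show $\le4$ from the discrepancy range: factor sums equal $c\ell$ plus the difference of two discrepancies. That alone gives width $<2.21$, i.e.\ balance $\le 3$?? Because of this, we would instead verify the balance constant directly with an automaton over pairs of factor positions. We check that the first-order statement “all equal-length factors differ in sum by $\le 4$” holds and that “by $\le3$” fails with an explicit witness.

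The main obstacle is the addition automaton, specifically proving finiteness of the carry set and certifying completeness. Everything downstream is mechanical once $\Add$ is trusted.
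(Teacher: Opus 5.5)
Your architecture matches the paper's: a certified adder, a synchronized graph of $b$, a first-order check of the recurrence, strong induction, decay of the digit discrepancies with finite suffix optimization, and a tree morphism. However, the adder as you describe it is not sound for these weights.

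With $U_0,\ldots,U_4=1,2,3,4,5$, the relation $U_p=U_{p-2}+U_{p-3}$ fails at $p=3$ ($4\ne2+1$). So a carry kept in the basis of the last three weights cannot be propagated homogeneously down to position $0$, and an automaton reading from the most significant digit does not know when it has reached that position. Accepting when the homogeneous carry value vanishes certifies addition for the homogeneous weights $W$ ($W_0=2$, $W_p=U_p$ for $p\ge1$), not for $U$. For example, the pair $(1,1)$ would be matched with $1000$, of value $4$. Requiring the carry vector itself to vanish instead rejects every output for that pair.

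The missing idea is a final-digit correction. Run the carry for $W$, remember the last signed digit $z_0$, and accept when the $W$-value $\Lambda(r)$ of the carry equals $z_0$. This is exact because $\sum_p z_pU_p=\Lambda(r)-z_0$. Your Pisot finiteness argument is then unnecessary: truncating carries to $[-8,8]^3$ does not affect soundness, and completeness is the certified inclusion $000(G\times G)\subseteq\exists z\,\Add(x,y,z)$.

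Separately, your well-definedness induction assumes increments in $\{0,1\}$ but never re-establishes them at step $n$. The hypothesis $1\le a(m)\le m$ for $1\le m<n$ does propagate: $n-a(n-1)\in[1,n-1]$, then $n-a(n-a(n-1))\in[a(n-1),n-1]$, then $a(n)\in[1,n-1]$. Alternatively, as the paper does, prove $1\le b(n)\le n$ by automata and identify $a=b$ in the same induction.

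The morphic part also needs repair. The $\eta$ of Table~\ref{tab:morphism} is not a letter coding, since several images have length two. The $26$ letters do not come from minimization. The paper sets $\mu(S)=Sq_1$ and sends each of the $27$ reachable $(D,G)$-state pairs to its legal children in digit order. It proves that genealogical order on canonical words equals numerical order; this is the Rigo--Maes hypothesis, which your proposal does not check. Hence $\tau(x)=d$ for $x=\mu^\omega(S)$, where $\tau$ reads off $D$-outputs. The paper then deletes the two childless letters $q_6,q_{21}$ by a projection $\pi$. From $\pi\mu=\nu\pi$ and $x=\mu(\pi(x))$ it gets $\nu^\omega(S)=\pi(x)$ and $\eta(\nu^\omega(S))=\tau(x)$, with $\nu=\pi\mu$ and $\eta=\tau\mu$ both non-erasing. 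A finite check alone does not yield the infinite identity without this argument.

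For balance, your ``$\le3$??'' is an arithmetic slip. With $f(n)=a(n)-cn$, length-$\ell$ factors at $i$ and $j$ contain $c\ell+f(i+\ell)-f(i)$ and $c\ell+f(j+\ell)-f(j)$ ones. Their difference involves four discrepancy values, so it is below twice the width $2.204315$ of \eqref{eq:mainbound}, that is, below $4.40863$, hence at most $4$. No automaton over pairs of positions is needed. Sharpness comes from the length-$72$ factors at $135$ and $214$, which contain $43$ and $39$ ones.

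In the offset part, $-2<f(n)+\{cn\}<3$ is immediate and needs no margin analysis. Each of the four offsets still needs a witness ($n=1167,2,1,93$) with a certified floor.

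Finally, for the effective extrema, path optimization gives only the outer endpoints. The inner ones need realizable witnesses: attach each optimal suffix to a recorded prefix reaching its starting $(E,G)$-pair. The rational bracket for $c$ must also shrink faster than these witnesses grow.
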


The central step is \eqref{eq:identification}. We build an addition
automaton for the Padovan system, prove it sound by a carry invariant
and complete by a finite inclusion of languages, and verify the
recurrence on synchronized representations. Strong induction identifies the
resulting function with~\eqref{eq:recurrence}. The remaining conclusions
follow from a contracting quadratic form, finite weighted-path optimization,
and the representation tree.

Nested recurrences with morphic interpretations have been studied
systematically by Celaya and Ruskey~\cite{CR}. Shallit's treatment of the
Narayana system~\cite{Shallit} is a close methodological precedent: it uses
synchronized automata and first-order verification to establish identities
for related nested recurrences. Our proof uses this automata method with a
directly certified adder for the particular weights~\eqref{eq:weights}.
The passage from abstract-numeration automaticity to morphicity is due to
Rigo and Maes~\cite{RM}; we check its representation-order hypothesis
explicitly and also provide the substitution itself. Our contributions are
the identification of A076502, its discrepancy bounds, and an explicit
morphic presentation.

Recent work of Letouzey, Li, and Steiner~\cite{LLS}, and of
Letouzey~\cite{Letouzey}, develops the canonical-numeration, shift, and
discrepancy approach for generalized Hofstadter functions. The latter work
also supplies formalized finite optimization and tail bounds for global
discrepancy extrema. \Cref{sec:discrepancy} follows their finite-tail method.
Their iterated-composition recurrence differs from
\eqref{eq:recurrence}, so identifying A076502 and handling its exceptional
initial weight require separate arguments. We specialize the method to
the correction automaton $E$ and use a real quadratic form in
place of explicit complex-root estimates.

Finite-state normalization is classical in Pisot bases~\cite{Frougny}
and in linear numeration systems whose characteristic polynomial is
the minimal polynomial of a Pisot number~\cite{FS96}; see also~\cite{FS}
on finite beta-expansions and~\cite{Hollander} on regular greedy languages.
There is an initial-value
issue here: the minimal cubic recurrence fails at $p=3$, since $4\ne2+1$.
Indeed, a strictly increasing integer basis starting with $U_0=1$ cannot
satisfy $U_3=U_1+U_0$, since no integer $U_2$ lies strictly between $U_1$
and $U_1+1$. The extended annihilator $X(X^5-X^4-1)$ follows by
multiplication by $X^2-X+1$, but does not remove this exception. We prove
greedy regularity and addition directly for these initial values, using a
final-digit carry correction to handle the exceptional weight.

\section{Greedy representations and certified addition}\label{sec:arithmetic}
For a binary word $w=w_k\cdots w_0$, let
$\val(w)=\sum_{p=0}^k w_pU_p$. Let $G$ denote the language of greedy
representations with arbitrary leading zeros, including the empty word.

\begin{lemma}\label[lemma]{lem:greedy}
A binary word belongs to $G$ if and only if consecutive positions carrying
$1$ are at distance at least five, with the sole exception that positions
$4$ and $0$ may both carry $1$. Every nonnegative integer has a unique
representation after leading zeros are removed. A legal word supported on
positions $0,\ldots,m$ has value at most $U_{m+1}-1$.
\end{lemma}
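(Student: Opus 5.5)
We prove the three assertions of \cref{lem:greedy} in the order bound, uniqueness, characterization. First we fix the arithmetic of the weights. From \eqref{eq:weights} we have $U_0,\dots,U_9=1,2,3,4,5,7,9,12,16,21$. We then use two identities. The first is $U_{p+1}=U_p+U_{p-4}$ for $p\ge5$. This holds for the small values and follows in general from $U_{p+1}=U_{p-1}+U_{p-2}=(U_{p-3}+U_{p-4})+U_{p-2}=U_p+U_{p-4}$ whenever $U_p=U_{p-2}+U_{p-3}$ is available, i.e.\ $p\ge4$, $p-1\ge4$. At $p=4$ we instead have $U_5=7=U_4+U_0+1$, and we track this exceptional case separately. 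Call a word \emph{legal} if it satisfies the distance condition of the lemma.

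\textbf{Bound.} The key step is the claim that a legal word supported on positions $0,\dots,m$ has value at most $U_{m+1}-1$. We prove it by strong induction on $m$. For $m\le4$ the legal words have at most one $1$, except $10001$ with value $6=U_5-1$. These are checked directly against $U_1-1,\dots,U_5-1=1,2,3,4,6$. For $m\ge5$, take a legal word whose top $1$ is at position $q\le m$. If $q<m$, the bound follows from the induction hypothesis at $q$ and monotonicity of $U$. If $q=m$, the remaining $1$s lie in positions $\le m-5$ and still form a legal word, so its value is at most $U_{m-4}-1$. The total is then at most $U_m+U_{m-4}-1=U_{m+1}-1$ by the identity above, valid because $m\ge5$. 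The exception at positions $4,0$ sits entirely in the tail and is covered by the base cases.

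\textbf{Existence and uniqueness.} For existence, run the greedy algorithm: pick the largest $U_p\le n$ and recurse on $r=n-U_p<U_{p+1}-U_p$. For $p\ge5$ this gives $r<U_{p-4}$, so the next selected position is at most $p-5$. For $p=4$ we get $r<2$, so $r\in\{0,1\}$, and $r=1$ selects position $0$, which is exactly the permitted exception. For $p\le3$ we get $r<U_{p+1}-U_p=1$, so $r=0$. Hence greedy outputs are legal. Conversely, the bound shows that in any legal representation of $n$ the top position $q$ satisfies $U_q\le n<U_{q+1}$, so $q$ is the greedy choice. Induction on the remainder, which is again legal, then gives uniqueness after removing leading zeros. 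It also shows that every legal word is the greedy word of its value.

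\textbf{Characterization of $G$.} The two directions of ``$w\in G$ iff $w$ is legal'' now follow. Greedy words are legal by the existence argument, and legal words are greedy by the uniqueness argument. Leading zeros and $\eps$ are handled trivially.

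The main obstacle is the exceptional weight, where $U_5\ne U_4+U_0$ off by one. The pair at positions $4$ and $0$ must be shown to be the only extra slack, and it must not propagate to higher positions. To secure this, we state the induction hypothesis as the sharp bound $U_{m+1}-1$ and verify it by hand through $m=5$ ($U_6-1=8$, attained by $100001$) before the uniform step takes over.
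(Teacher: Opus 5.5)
Your proof is correct and takes essentially the same route as the paper. The identity $U_{p+1}-U_p=U_{p-4}$ for $p\ge5$, together with the direct check of the exceptional pair $10001$ at $m=4$, drives both the remainder argument showing that greedy words are legal and the strong induction $U_m+(U_{m-4}-1)=U_{m+1}-1$, which forces the top position of any legal word to be the greedy choice; the only difference is the order in which you present the three parts.
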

\begin{proof}
For $p\ge5$, the recurrence gives
\begin{equation}\label{eq:gap}
 U_{p+1}-U_p=U_{p-4}.
\end{equation}
Together with the initial values, this proves that the weights are strictly
increasing. If the greedy algorithm selects $U_p$, its remainder is smaller
than $U_{p+1}-U_p$. For $p\ge5$ the next selected position is therefore at
most $p-5$. At $p=4$ the remainder is at most $1$, allowing position $0$;
at $p\le3$ the remainder is zero.

Conversely, induct on the largest permitted position $m$. For $m\le3$ only
singletons can occur. For $m=4$ the maximum is $U_4+U_0=6=U_5-1$.
If $m\ge5$ is selected, the lower positions are at most $m-5$, so induction
and~\eqref{eq:gap} bound the total by
\[
 U_m+(U_{m-4}-1)=U_{m+1}-1.
\]
If $m$ is not selected, use the preceding induction case. The largest
selected position is thus the first greedy choice, and the remaining
positions are greedy by induction. Existence follows because $U_0=1$;
the deterministic greedy algorithm proves uniqueness.
\end{proof}

The exception is positional: $10001$ is legal, but $100010$ is not. A
partial DFA for $G$ records the number of zeros still required before the
next $1$, together with a terminal state for the exceptional final $1$.
Every state of this partial automaton is accepting.

\subsection{An exact carry invariant}
The weights do not satisfy the homogeneous recurrence at $p=3$.
To account for this, put $W_0=2,W_1=2,W_2=3$ and
$W_p=W_{p-2}+W_{p-3}$ for $p\ge3$. Then
$W_p=U_p$ for $p\ge1$ and $W_0=U_0+1$.

\begin{lemma}\label[lemma]{lem:carry}
Read a signed digit word from most to least significant, starting with
$r=(0,0,0)$ and updating on digit $z$ by
\begin{equation}\label{eq:carry}
 T(r,z)=(r_2+z,r_0+r_2,r_1),\qquad
 \Lambda(r)=2r_0+2r_1+3r_2.
\end{equation}
After the final digit $z_0$, its represented value is
\begin{equation}\label{eq:carryvalue}
 \sum_p z_pU_p=\Lambda(r)-z_0.
\end{equation}
The empty word has both sides zero.
\end{lemma}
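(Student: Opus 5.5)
The plan is to prove a stronger invariant by induction on the number of digits read, and then specialise it at the end of the word. For a word $z_k\cdots z_0$, let $r^{(j)}$ be the state after reading the digits $z_k,\dots,z_j$, so that $r^{(k+1)}=(0,0,0)$. The invariant I will aim for is
\begin{equation*}
 \sum_{p\ge j} z_pW_{p-j} \;=\; \Lambda\bigl(r^{(j)}\bigr)\quad\text{expressed in shifted weights,}
\end{equation*}
or, more usefully, that $r^{(j)}$ records the contribution of the digits already read in a three-term basis. Concretely, after reading down to position $j$, the partial value $\sum_{p\ge j} z_p W_{p-j+i}$, viewed as a function of a further shift $i$, satisfies the linear recurrence $W_{i}=W_{i-2}+W_{i-3}$. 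It is therefore determined by three consecutive coefficients. I will guess and verify that
\begin{equation*}
 \sum_{p\ge j} z_p W_{p-j+i} \;=\; r_2 W_i + r_0 W_{i-1}' + \cdots
\end{equation*}
The cleaner route is the following. Define $V_j=\sum_{p\ge j} z_p W_{p-j}$; the claim to prove is $V_j=\Lambda(r^{(j)})$. Since $W_0=2$, $W_1=2$ and $W_2=3$, we have $\Lambda(r)=r_0W_0+r_1W_1+r_2W_2$. So the invariant I really want is the vector identity: the state $r^{(j)}=(r_0,r_1,r_2)$ is the coefficient vector such that $\sum_{p\ge j} z_pW_{p-j+i}=r_0W_i+r_1W_{i+1}+r_2W_{i+2}$ for all $i\ge0$.

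The base case is the empty prefix, where both sides vanish. For the induction step, suppose the identity holds for $r=r^{(j+1)}$ and append the digit $z=z_j$. The left side for position $j$ at shift $i$ equals the left side for $j+1$ at shift $i+1$, plus $zW_i$. By hypothesis this is
\[
 r_0W_{i+1}+r_1W_{i+2}+r_2W_{i+3}+zW_i .
\]
Using the recurrence $W_{i+3}=W_{i+1}+W_i$, valid for $i\ge0$ since it is exactly the defining recurrence at index $i+3\ge3$, this becomes
\[
 (r_2+z)W_i+(r_0+r_2)W_{i+1}+r_1W_{i+2}.
\]
That is precisely $T(r,z)$ in the order $(r_0',r_1',r_2')$. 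This step is the heart of the proof, and the main point to check is that the recurrence is used only at indices $\ge3$, where it holds for $W$ (unlike $U$, which fails at $p=3$).

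After the final digit, take $j=0$ and $i=0$. This gives $\sum_p z_pW_p=\Lambda(r)$, and since $W_p=U_p$ for $p\ge1$ and $W_0=U_0+1$, we get $\sum_p z_pU_p=\Lambda(r)-z_0$. Since $(W_0,W_1,W_2)=(2,2,3)$, $\Lambda(r)=2r_0+2r_1+3r_2$ is exactly the shift-$0$ evaluation. For the empty word, $r=(0,0,0)$ and there is no final digit, so both sides are zero. The only real obstacle is finding the right generalised invariant, with all shifts $i$ at once, so that the induction closes; once stated, each step is a one-line computation.
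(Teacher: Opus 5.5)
Your proof is correct and is essentially the paper's argument. The paper treats $r$ as the remainder $r_0+r_1X+r_2X^2$ of the digit polynomial modulo $X^3-X-1$ and evaluates it with the functional $X^p\mapsto W_p$, which kills that ideal because $W_{i+3}=W_{i+1}+W_i$ for all $i\ge0$; your all-shifts invariant is the image of that congruence under the shifted functionals $X^m\mapsto W_{m+i}$, and you finish with the same correction $W_0=U_0+1$ that subtracts $z_0$.

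You can delete the exploratory displays before ``the cleaner route'', notably the incomplete guess involving $W'_{i-1}$, since the argument from that point on is self-contained.
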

\begin{proof}
The update multiplies the digit polynomial by $X$, appends $z$, and reduces
modulo $X^3-X-1$. The linear map $X^p\mapsto W_p$ vanishes on multiples
of that polynomial, by the homogeneous recurrence. Its value on the
remainder is $2r_0+2r_1+3r_2$. Replacing $W$ by $U$ subtracts exactly $z_0$.
\end{proof}

For addition on digit tracks $x,y,z$, feed the signed digits $x+y-z$ to
\eqref{eq:carry}. Retain only carries in $[-8,8]^3$, remember the last
signed digit, impose membership in $G$ on all three tracks, and accept when
$\Lambda(r)$ equals that last digit. Trim and minimize the resulting
partial DFA. It has $411$ states; throughout this paper state counts exclude
the implicit rejecting sink.

\begin{proposition}[Certified addition]\label[proposition]{prop:addition}
Every triple accepted by the resulting relation $\Add$ satisfies $x+y=z$
on represented values. Moreover, for every equally long pair of words in
$G$, prefixing three zero columns gives an input pair with an accepted output.
\end{proposition}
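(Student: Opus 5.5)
Soundness follows directly from \cref{lem:carry}; completeness needs two things: the carries of a genuine sum stay in the box $[-8,8]^3$, and the three extra leading zero columns suffice. Since $\Add$ is obtained by trimming and minimizing the automaton described above, I would argue about that unminimized automaton; trimming and minimization preserve the accepted language.

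\emph{Soundness.} Take an accepted triple $(x,y,z)$ of equal length and feed the signed digits $s_p=x_p+y_p-z_p$ to the update~\eqref{eq:carry}. Restricting carries to $[-8,8]^3$ only removes runs, so every accepted run is a genuine run of $T$. At the end, the acceptance condition gives $\Lambda(r)=s_0$. By \cref{lem:carry},
\[
\val(x)+\val(y)-\val(z)=\sum_p s_pU_p=\Lambda(r)-s_0=0 .
\]
For the empty word both sides vanish. This gives $x+y=z$ on represented values.

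\emph{Completeness.} Let $x,y\in G$ have length $k$, and let $n=\val(x)+\val(y)$. By \cref{lem:greedy}, $n$ has a greedy representation. Since $\val(x),\val(y)\le U_k-1$, we get $n<2U_k\le U_{k+3}$. This inequality holds because $U_{k+3}=U_{k+1}+U_k\ge 2U_k$ for $k\ge1$ and is checked directly for small $k$. So $n$ has a greedy representation $z$ of length $k+3$, with leading zeros allowed, and $z\in G$. This explains the three prefixed zero columns.

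It remains to show that the carry vector stays in $[-8,8]^3$ along the whole run on $0^3x$, $0^3y$, $z$. The final acceptance test then holds automatically by \cref{lem:carry}, because the values agree. To bound the carries, I would interpret the state after reading a prefix as a residue. The triple $r$ is the remainder of the prefix's digit polynomial modulo $X^3-X-1$. Evaluating that polynomial at the plastic number $\rho$ and its conjugates $\sigma,\bar\sigma$ (with $|\sigma|<1$) determines $r$ linearly and invertibly. The conjugate embeddings of a prefix with signed digits in $\{-1,0,1,2\}$ are bounded by geometric sums such as $2/(1-|\sigma|)$, so they are uniformly small. The $\rho$-embedding is the value of the prefix read as a number. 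The greedy prefixes of $x$, $y$ and $z$ over the top positions represent $\lfloor\cdot/U_j\rfloor$-type truncations whose weighted difference is bounded, because the tails are $<U_j$ by \cref{lem:greedy}. Hence $|(\text{prefix of }x+y-z)\text{ value}|$ is $O(1)$ in units of $\rho^{j}$, which gives a bounded dominant coordinate. Combining the bounds on all three coordinates, then inverting the Vandermonde-type matrix, yields an explicit box. The exceptional weight $U_0$ only affects the last digit, and the final-digit correction in the acceptance rule absorbs it.

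The main obstacle is getting the constant down to $8$. The analytic bound above may be loose, so I would instead certify containment by a finite computation. I would build the automaton with a larger box, say $[-20,20]^3$, intersect it with $G\times G\times G$ and with the condition that the pair is padded by three leading zeros, and compute reachable states. I would then check that every state lying on a path to acceptance already has carries in $[-8,8]^3$. Equivalently, I would verify the language inclusion
\[
0^3G\times 0^3G\;\subseteq\;\pi_{1,2}(\Add)
\]
by a product-and-complement construction on these finite automata. The analytic argument guarantees that the larger box suffices, so this finite check is conclusive.
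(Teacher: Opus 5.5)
Your proof is correct and matches the paper's: soundness comes from \cref{lem:carry} exactly as you argue, and completeness is the finite inclusion $000(G\times G)\subseteq\exists z\,\Add(x,y,z)$ of \eqref{eq:addtotal}, decided by projection, determinization and reachability. The one correction concerns your closing remark: that inclusion for the cutoff-$8$ automaton, together with soundness, is already conclusive, so the conjugate-embedding carry bound (whose box $[-20,20]^3$ you never actually derive) and the length count $n<U_{k+3}$ are motivation rather than needed steps, and the paper notes explicitly that the carry cutoff only restricts the candidate automaton.
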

\begin{proof}
Soundness follows immediately from \cref{lem:carry} and is independent of
the carry cutoff. Completeness is the following finite-language inclusion,
checked exhaustively by projection, determinization and product reachability:
\begin{equation}\label{eq:addtotal}
 000(G\times G)\ \subseteq\ \exists z\,\Add(x,y,z).
\end{equation}
Here $000$ means three zero columns on both input tracks; the quantified
output has the same total length. Every pair of integers has an equally
padded input representation of this form. An accepted output exists by
\eqref{eq:addtotal}, and its value is the sum by soundness.
\end{proof}

The bound on the carries only restricts the candidate automaton;
completeness comes from the inclusion~\eqref{eq:addtotal}.

\section{Identification with the recurrence}\label{sec:identification}
Define
\[
 b(n)=\sh(n)+E(\rep(n)).
\]
State $0$ of $E$ loops on digit $0$, so leading zeros do not change $b$.
To construct its synchronized graph, use a two-digit delay on the input
track. It produces the digits of the shifted word, padded with two leading
zeros. Apply the carry update to the difference between these delayed digits
and the output digits, and run $E$ on the input. The acceptance condition is
\[
 \Lambda(r)-z_0+E(q)=0,
\]
with both tracks in $G$. Every accepted pair $(n,v)$ then satisfies $v=b(n)$.
Using the same finite carry cutoff, trimming and minimization give a
$72$-state relation $\Graph$.

\begin{proposition}\label[proposition]{prop:graph}
The graph relation is total on $000G$, and its represented function satisfies
\begin{equation}\label{eq:bbounds}
 b(0)=0,\qquad b(1)=1,\qquad 1\le b(n)\le n\quad(n\ge1).
\end{equation}
\end{proposition}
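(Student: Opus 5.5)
The proof splits into a soundness part, which follows from the carry invariant, and a totality part, which is a finite-automaton check, followed by a few direct evaluations. First I would record soundness. By \cref{lem:carry}, applied to the signed digits obtained by subtracting the output digits from the two-step-delayed input digits, an accepted pair $(x,v)$ satisfies
\[
 \sh(\val(x))-\val(v)=\Lambda(r)-z_0 .
\]
Here the delayed track represents $\sh(n)$ because it is the input with its two least significant digits discarded, and both tracks lie in $G$. The acceptance condition then gives $\val(v)=\sh(n)+E(q)=b(n)$. As in \cref{prop:addition}, this holds independently of the carry cutoff $[-8,8]^3$. One point to check is that the delayed digits form a word whose value is $\sum_{p\ge2}x_pU_{p-2}$ when read on the same positional grid as the output. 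This is true because the two leading padding zeros align position $p$ of the input with position $p-2$ of the delayed track.

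Totality on $000G$ is a finite inclusion of languages, parallel to \eqref{eq:addtotal}:
\[
 000G\subseteq\exists v\,\Graph(x,v).
\]
I would verify it by projecting $\Graph$ onto its input track, determinizing, and checking that the product with the partial DFA for $000G$ reaches no state where the input is in $000G$ but the projection rejects. The real content is that three padding zeros give enough room for the output $b(n)$. This is plausible since $\sh(n)\le n$ and $E$ is bounded, so $b(n)$ needs at most a bounded number of extra positions. It is also the only reason the cutoff cannot lose witnesses. The check is mechanical but is the main obstacle, because an accepting output must exist for every input and the cutoff $[-8,8]^3$ might a priori be too small. If the inclusion fails, I would enlarge the cutoff or the padding and recheck.

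Next come the values and bounds. The values $b(0)=0$ and $b(1)=1$ follow from evaluating $E$ on $\eps$ and on $\rep(1)=1$, since $\sh$ vanishes on both. For $1\le b(n)\le n$, I would build the synchronized relation
\[
 \{(x,v):\Graph(x,v)\wedge(\val v<1\vee \val v>\val x)\wedge \val x\ge1\}.
\]
The comparisons are realized by the adder of \cref{prop:addition}, for instance $\val v>\val x$ iff there is $w\ne0$ with $\Add(x,w,v)$, or directly by lexicographic comparison of equal-length greedy words, which is valid by \cref{lem:greedy}. I would then check that this relation is empty. Emptiness gives the bounds for every $n\ge1$, since totality supplies a graph witness for each $n$ and soundness identifies that witness with $b(n)$.
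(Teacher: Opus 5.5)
Your proof is correct and matches the paper on soundness, on totality (the finite inclusion $000G\subseteq\exists v\,\Graph(n,v)$), on the initial values, and on the lower bound. The lower bound is exactly the paper's check that no positive integer satisfies $\Graph(n,0)$.

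The difference is in the upper bound. You certify that the set of graph pairs with $\val v>\val x$ is empty. The paper instead certifies the positive inclusion $000G\subseteq\exists v,t\,[\Graph(n,v)\wedge\Add(v,t,n)]$, which needs only the soundness of $\Add$ (it gives $v+t=n$ with $t\ge0$).

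In your negated form, the lexicographic comparison is exact on equal-length words of $G$, so that variant is sound as stated. The $\Add$-based variant is more delicate. It needs the completeness direction: $\val v>\val x$ must force $\Add(x,w,v)$ for some $w\ne0$. \Cref{prop:addition} guarantees this only when both inputs carry three leading zero columns. Without that, the emptiness check could succeed vacuously on a bad pair that $\Add$ fails to recognize.

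To close this, you would add that $v$, and hence $w=v-x$, lies in $000G$. This does follow from soundness: $E\le2$ and the shift drops two positions, so $b(n)\le\sh(n)+2<U_{\ell-3}$ for inputs of length $\ell$. The paper's positive formulation avoids the issue entirely.
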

\begin{proof}
Projection and inclusion verify
$000G\subseteq\exists v\,\Graph(n,v)$. Soundness makes every such output
equal to the integer formula defining $b$, so in particular $b$ is
nonnegative. The language of inputs satisfying
$\exists v,t\,[\Graph(n,v)\wedge\Add(v,t,n)]$ contains $000G$.
The language satisfying $\Graph(n,0)$ contains no representation of a
positive integer. These two finite checks give the inequalities.
The initial values are read directly from $E$.
\end{proof}

Intersections implement conjunctions of synchronized relations; existential
projection removes tracks. To verify the recurrence, define $\Good(n)$ by
\begin{equation}\label{eq:good}
\begin{split}
\exists m,p,h,v,k,w,u\;[\;&m+1=n\ \wedge\ \Graph(m,p)
 \ \wedge\ h+p=n\\
 &\wedge\ \Graph(h,v)\ \wedge\ k+v=n
 \ \wedge\ \Graph(k,w)\\
 &\wedge\ \Graph(n,u)\ \wedge\ u+w=n\;].
\end{split}
\end{equation}
Every addition here is implemented with $\Add$. Some intermediate relations
are listed in \cref{tab:relations}.

\begin{table}[tb]
\centering\small
\begin{tabular}{lr}
\toprule
Represented relation & States\\
\midrule
$x+y=z$ & 411\\
$v=b(n)$ & 72\\
$p=b(n-1)$, $n\ge1$ & 79\\
$v=b(n-b(n-1))$ & 144\\
$w=b(n-b(n-b(n-1)))$ & 80\\
$\Good(n)$ & 7\\
\bottomrule
\end{tabular}
\caption{Selected partial DFAs used in the recurrence verification.}
\label{tab:relations}
\end{table}

\begin{proposition}\label[proposition]{prop:recurrence}
For every $n\ge2$,
\begin{equation}\label{eq:brec}
 b(n)+b\bigl(n-b\bigl(n-b(n-1)\bigr)\bigr)=n.
\end{equation}
\end{proposition}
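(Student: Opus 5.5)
The plan is to reduce \eqref{eq:brec} to a finite-language inclusion for the $7$-state automaton $\Good$ defined by \eqref{eq:good}, and then to read that inclusion back as a statement about integers. Concretely, we will check that every word of $000G$ representing an integer $n\ge2$ is accepted by $\Good$. In language terms this is
\[
 000\bigl(G\setminus\rep^{-1}\{0,1\}\bigr)\subseteq\Good ,
\]
where leading zeros are allowed. The check is done exactly as in \cref{prop:addition,prop:graph}: determinize, complement, and test reachability in the product. The set of representations of $0$ and $1$ is a finite, explicitly regular language of padded words, so subtracting it costs nothing.

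Next we unpack acceptance into arithmetic, using only soundness. Suppose the padded representation of $n$ is accepted. Then there are witnesses $m,p,h,v,k,w,u$, with padded representations in $G$, such that every conjunct holds on represented values.
\begin{itemize}
\item Soundness of $\Add$ (\cref{prop:addition}) gives $m=n-1$, $h=n-p$, $k=n-v$ and $u=n-w$ as integers.
\item Soundness of $\Graph$ gives $p=b(m)$, $v=b(h)$, $w=b(k)$ and $u=b(n)$.
\end{itemize}
Substituting, we get $p=b(n-1)$, $v=b(n-b(n-1))$ and $w=b\bigl(n-b(n-b(n-1))\bigr)$. All arguments are nonnegative automatically, because every value on a track is nonnegative. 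Finally $b(n)+w=u+w=n$, which is \eqref{eq:brec}. Totality of $\Graph$ and of $\Add$ is not needed in this direction: the inclusion already supplies the existential witnesses. The role of \cref{prop:graph} (namely $1\le b(n)\le n$) is only to explain why the inclusion can hold. The nested arguments stay in $[0,n]$, so the witnesses fit within the length of $n$'s padded word.

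The main obstacle is making sure that witnesses of the same word length as $n$ actually exist, so that the synchronized conjunction in \eqref{eq:good} is not vacuously restrictive. Every track of the automaton has the same length as the padding of $n$. We therefore need the outputs of $\Add$ and $\Graph$ on inputs in $000G$ to fit in that length. This holds because $b(\cdot)\le n$ and all intermediate values are at most $n$, so by \cref{lem:greedy} their greedy words fit.

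To be safe, I would first compute the intermediate relations of \cref{tab:relations} one at a time. I would confirm that each is total on $000G$ with $n\ge1$ where relevant, and only then intersect them. If the three-zero padding turns out to be insufficient at some step, I would prepend more zero columns; this is harmless because $E$ loops on $0$ in state $0$. Once $\Good$ is computed, the inclusion itself is a mechanical check.
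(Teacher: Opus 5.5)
Your proposal is correct and takes the same approach as the paper's proof: an exact inclusion of $000G$, restricted to values at least $2$, in $\Good$, then soundness of $\Add$ and $\Graph$ applied to each conjunct of \eqref{eq:good}; you are also right that only soundness is needed for the implication, with \eqref{eq:bbounds} and the three-zero padding serving only to explain why the inclusion succeeds. One trivial slip: the padded words of value $0$ or $1$ form the infinite (though regular) language $0^*\cup0^*1$, not a finite one.
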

\begin{proof}
The exact inclusion check verifies that $000G$, restricted to represented
values at least $2$, is contained in $\Good$. Soundness of each component
of~\eqref{eq:good} then gives~\eqref{eq:brec} for every such integer.
The witnesses are natural numbers. By~\eqref{eq:bbounds}, their values
are at most $n$, so their canonical representations fit within that of $n$;
the three additional leading zeros provide the input padding used in the
component checks, so the projections lose no witness.
\end{proof}

\begin{proof}[Proof of the identification]
For $n\ge2$, \eqref{eq:bbounds} gives
\[
 1\le n-b(n-1)<n,\qquad
 1\le n-b\bigl(n-b(n-1)\bigr)<n.
\]
The first inequality uses $1\le b(n-1)\le n-1$; the second applies the
same bounds to the first inner index. Starting with $a(0)=b(0)$ and
$a(1)=b(1)$, strong induction shows that~\eqref{eq:recurrence} is
well-defined and agrees with $b$ at every index. This proves existence,
uniqueness, nonnegativity, and~\eqref{eq:identification}.
\end{proof}

For the difference identity, synchronize the graph at $n$ and $n+1$ with
the output of $D$ and addition. The resulting $12$-state input relation
expresses $b(n)+D(\rep(n))=b(n+1)$; exact inclusion verifies that it contains
$000G$. This proves~\eqref{eq:difference}, including $D(\eps)=1$ at $n=0$.

\section{Discrepancy and effective extrema}\label{sec:discrepancy}
Set $f(n)=a(n)-cn$. Define
\[
 \delta_0=-c,\qquad \delta_1=-2c,\qquad
 \delta_p=U_{p-2}-cU_p\quad(p\ge2).
\]
By~\eqref{eq:identification}, for selected positions $J$,
\begin{equation}\label{eq:discrepancy}
 f(n)=E(\rep(n))+\sum_{p\in J}\delta_p.
\end{equation}

\subsection{A contracting quadratic form}
\begin{lemma}\label[lemma]{lem:contraction}
For $p\ge3$,
\begin{equation}\label{eq:decay}
 |\delta_p|<\frac5{16}\left(\frac78\right)^{p-3}.
\end{equation}
\end{lemma}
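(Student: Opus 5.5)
The plan is to exploit the fact that $\delta_p$ satisfies the Padovan recurrence and, by the choice $c=\rho^{-2}$, has no component along the dominant root $\rho$. Then $\delta_p$ lives in the two-dimensional invariant subspace of the complex pair of roots of $X^3-X-1$. These roots have modulus $\rho^{-1/2}\approx0.8688<7/8$. A real quadratic form adapted to that plane contracts by a factor $(7/8)^2$ per step, which gives the geometric decay.

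\emph{Step 1 (recurrence).} Since $U_p=U_{p-2}+U_{p-3}$ holds for $p\ge4$, the numbers $\delta_p=U_{p-2}-cU_p$ satisfy $\delta_p=\delta_{p-2}+\delta_{p-3}$ as soon as both $U_p$ and $U_{p-2}$ obey the recurrence, that is, for $p\ge6$. Put $v_p=(\delta_p,\delta_{p+1},\delta_{p+2})$. Then $v_{p+1}=Mv_p$ for $p\ge3$, where $M$ is the companion matrix of $X^3-X-1$.

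\emph{Step 2 (the dominant component vanishes).} Let $\ell$ be the left eigenvector of $M$ for the eigenvalue $\rho$, so $\ell\cdot v_{p+1}=\rho\,\ell\cdot v_p$. Because the other two roots have modulus $<1$, we have $U_p=K\rho^p+o(1)$ with $K>0$. Hence $\delta_p=K\rho^{p}(\rho^{-2}-c)+o(1)=o(1)$. If $\ell\cdot v_3\neq0$ then $\delta_p$ would grow like $\rho^p$, so $\ell\cdot v_3=0$. Consequently every $v_p$ with $p\ge3$ lies in the plane $H=\ker\ell$. Alternatively, one checks $\ell\cdot v_3=0$ directly from $P(c)=0$.

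\emph{Step 3 (contracting form).} On $H$, the restriction $M|_H$ has the complex eigenvalues $\mu,\bar\mu$ with $|\mu|^2=1/\rho$. I would take coordinates $(\delta_p,\delta_{p+1})$ on $H$, eliminating $\delta_{p+2}$ via $\ell\cdot v=0$. In these coordinates $M|_H$ is a $2\times2$ real matrix $A$ with trace $2\operatorname{Re}\mu$ and determinant $1/\rho$. Take the standard form $Q(x,y)=y^2-2\operatorname{Re}\mu\,xy+|\mu|^2x^2$, the norm for which $A$ acts as $|\mu|$ times a rotation. Then $Q(Av)=|\mu|^2Q(v)\le(7/8)^2Q(v)$. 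For a clean certification I would instead fix a nearby rational form and verify that $(7/8)^2Q-A^{\mathsf T}QA$ is positive semidefinite by checking its diagonal entry and determinant. This check uses rational interval bounds on $c$ and $\rho$, for instance $0.5698<c<0.5699$. Since $Q$ is positive definite, the discriminant computation gives a constant $\kappa$ with $\delta_p^2\le\kappa\,Q(\delta_p,\delta_{p+1})$. Therefore
\[
|\delta_p|\le\sqrt{\kappa\,Q(v_3)}\,(7/8)^{p-3}.
\]

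\emph{Step 4 (base values).} The final inequality requires $\sqrt{\kappa Q(v_3)}<5/16$, with $v_3$ computed from $\delta_3=2-4c$, $\delta_4=3-5c$, $\delta_5=4-7c$ (numerically about $-0.279,\ 0.151,\ 0.011$). If the constant from the form is not sharp enough, I would verify $p=3,\dots,p_0$ directly with interval arithmetic and start the induction at $p_0$. This is the step I expect to be the main obstacle, because $5/16$ is fairly tight against $|\delta_3|\approx0.279$: the quadratic form has to be tuned so that the ellipse bound at the starting index does not lose too much. The fallback is to check finitely many initial $p$ directly and apply the contraction only from a later index, where $(7/8)^{p-3}$ has slack.
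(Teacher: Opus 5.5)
Your plan is the paper's proof. Since $2\operatorname{Re}\mu=-\rho$ and $|\mu|^2=\rho^{-1}$, your form is exactly the paper's $Q(x,y)=\rho^{-1}x^2+\rho xy+y^2$ with $Q(y,-\rho y-\rho^{-1}x)=\rho^{-1}Q(x,y)$, your plane $H$ is the relation $\delta_{p+2}+\rho\delta_{p+1}+\rho^{-1}\delta_p=0$ (which the paper gets exactly from $F_{i+1}=\rho F_i$, $F_i=U_{i+2}+\rho U_{i+1}+\rho^{-1}U_i$, rather than by asymptotics), and your $\kappa$ is $4\rho/(3-\rho)$. The obstacle you feared in Step~4 does not arise: $\sqrt{\kappa\,Q(\delta_3,\delta_4)}\approx0.286<5/16$, and the paper certifies this directly at $p=3$ via $Q(\delta_3,\delta_4)<3/100$, $(3-\rho)/(4\rho)>5/16$, $\rho^{-1}<49/64$, giving $\delta_p^2<\frac{12}{125}(49/64)^{p-3}<\frac{25}{256}(49/64)^{p-3}$, so neither a perturbed rational form nor a later starting index is needed.
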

\begin{proof}
For $i\ge1$, let
$F_i=U_{i+2}+\rho U_{i+1}+\rho^{-1}U_i$.
The weight recurrence and $\rho^2=1+\rho^{-1}$ give $F_{i+1}=\rho F_i$.
Consequently
\begin{equation}\label{eq:deltarec}
 \delta_{p+2}+\rho\delta_{p+1}+\rho^{-1}\delta_p=0\quad(p\ge3).
\end{equation}
The cutoff $p\ge3$ ensures that both homogeneous identities used here
avoid the anomalous initial weight.

Consider the positive quadratic form
\begin{align*}
 Q(x,y)&=\rho^{-1}x^2+\rho xy+y^2\\
 &=\left(y+\frac{\rho x}{2}\right)^2+
       \frac{3-\rho}{4\rho}x^2.
\end{align*}
Direct expansion gives
\begin{equation}\label{eq:energy}
 Q(y,-\rho y-\rho^{-1}x)=\rho^{-1}Q(x,y).
\end{equation}
All constants can be bounded rationally. The signs of $t^3-t-1$ at
$331/250$ and $53/40$ give
$331/250<\rho<53/40$. Hence $\rho>64/49$, $\rho^{-1}<19/25$,
$\rho>33/25$, and $(3-\rho)/(4\rho)>5/16$.
Using $\delta_3=2-4c$ and $\delta_4=3-5c$, these brackets imply
\[
 -\frac{141}{500}<\delta_3<-\frac{139}{500},\qquad
 \frac{147}{1000}<\delta_4<\frac{153}{1000}.
\]
Keeping the sign of the mixed term,
\begin{align*}
 Q(\delta_3,\delta_4)
 &<\frac{19}{25}\left(\frac{141}{500}\right)^2
   +\left(\frac{153}{1000}\right)^2
   -\frac{33}{25}\frac{139}{500}\frac{147}{1000}\\
 &=\frac{747603}{25000000}<\frac3{100}.
\end{align*}
Iterating~\eqref{eq:energy} along~\eqref{eq:deltarec} gives
\[
 |\delta_p|^2<\frac{12}{125}
       \left(\frac{49}{64}\right)^{p-3}
 <\frac{25}{256}\left(\frac{49}{64}\right)^{p-3},
\]
which proves~\eqref{eq:decay}.
\end{proof}

For $K\ge5$, selected positions at least $K$ are spaced by at least five.
The absolute contribution of this tail is therefore strictly smaller than
\begin{equation}\label{eq:tail}
 R_K=\frac{(5/16)(7/8)^{K-3}}{1-(7/8)^5}.
\end{equation}
The finitely many lower positions and the bounded output $E$ already imply
that $f$ is bounded. To obtain useful constants we optimize the lower digits
jointly with the automaton state.

\subsection{Suffix optimization and realizable witnesses}
Following the finite-tail method of~\cite{Letouzey}, we retain the correction
state and reaching prefixes so that extremal paths represent actual integers.
Let $\alpha<c<\beta$ be a rational bracket. After an arbitrary high prefix,
there are $28$ reachable pairs consisting of an $E$-state and a $G$-state.
For each such pair fix a finite reaching prefix. A breadth-first search
finds prefixes of length at most $H=13$.

For each legal path of length $K$ from these states, its contribution from
the last $K$ digits, including the final output of $E$, has the form
$A-cB$, where $A\in\Z$ and $0\le B<U_K$. Dynamic programming on the
finite state set computes the exact rational extrema
\begin{equation}\label{eq:suffixbounds}
 L_K=\min(A-\beta B),\qquad V_K=\max(A-\alpha B),
\end{equation}
retaining one path attaining each endpoint. By~\eqref{eq:tail},
\begin{equation}\label{eq:globalbound}
 L_K-R_K<f(n)<V_K+R_K\quad(n\ge0).
\end{equation}
Short representations are padded with zeros; this changes neither legality
nor the output of $E$.

\begin{proposition}[Effective global extrema]\label[proposition]{prop:extrema}
Let $m=\inf_{n\ge0} f(n)$ and $M=\sup_{n\ge0} f(n)$.
The suffix construction provides rational enclosures of both numbers with
width at most
\begin{equation}\label{eq:convergencerate}
 2R_K+(\beta-\alpha)(U_K+U_{K+H}),\qquad H=13.
\end{equation}
Choosing $\beta-\alpha=2^{-4K-64}$ gives widths tending to zero.
\end{proposition}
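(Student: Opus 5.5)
The enclosures are $[L_K-R_K,\;\ell_K]$ for $m$ and $[u_K,\;V_K+R_K]$ for $M$. The outer endpoints come from \eqref{eq:globalbound}. The inner endpoints $\ell_K,u_K$ are values of $f$ at explicit integers built from the retained extremal paths. Since an actual value $f(n_*)$ is irrational, I would replace it by a rational bound obtained from the bracket $\alpha<c<\beta$.

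\textbf{Witness construction.} Take the path attaining $L_K$, starting from a pair $(q,s)$ of an $E$-state and a $G$-state. Prepend the fixed reaching prefix $\pi_{(q,s)}$ of length at most $H$. First I must check that the concatenation is a legal greedy word.
\begin{itemize}
\item Legality is preserved because the prefix leads to the $G$-state $s$, and the suffix path is legal from $s$.
\item The $E$-state reached is $q$, because the prefix was chosen to reach the pair.
\end{itemize}
The integer $n_*$ represented by the concatenated word therefore satisfies, by \eqref{eq:discrepancy},
\[
 f(n_*)=\bigl(A-cB\bigr)+\sum_{p\in J_{\text{prefix}}}\delta_p .
\]
Here the prefix occupies positions $K,\dots,K+H-1$, and the final output of $E$ is already included in $A-cB$.

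\textbf{Rational upper bound for $m$.} I would write the prefix contribution also in the form $A'-cB'$ with $0\le B'<U_{K+H}$. This is possible because each $\delta_p=U_{p-2}-cU_p$ and the selected weights sum to at most $U_{K+H}-1$ by \cref{lem:greedy}. Replacing $c$ by $\alpha$ in both terms gives
\[
 m\le f(n_*)< (A-\alpha B)+(A'-\alpha B')=:\ell_K .
\]
The same construction with the path attaining $V_K$, using $\beta$ in place of $\alpha$, gives $M>u_K$. If the shortest reaching prefix for a pair is the empty prefix or begins with zeros, that is harmless: padding changes neither legality nor $E$.

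\textbf{Width estimate.} For the minimum the width is $\ell_K-(L_K-R_K)$, which I split into two pieces.
\begin{itemize}
\item The suffix piece is $(A-\alpha B)-(A-\beta B)=(\beta-\alpha)B<(\beta-\alpha)U_K$. This uses that the same path attains $L_K$.
\item The prefix piece is at most $R_K+(\beta-\alpha)U_{K+H}$. The prefix's true contribution $\sum_{p\ge K}\delta_p$ has absolute value below $R_K$ by \eqref{eq:tail}, and rounding $c$ to $\alpha$ costs at most $(\beta-\alpha)B'$.
\end{itemize}
Adding the pieces gives the bound \eqref{eq:convergencerate}, and the same argument applies to the maximum.

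\textbf{Main obstacle.} The delicate step is justifying that every pair $(q,s)$ from which an extremal suffix can start is reachable after a high prefix lying at positions $\ge K$. This is where the count of $28$ pairs and the prefix length $H=13$ are used, together with the observation that the prefix is shifted to positions $\ge K$, which requires $K\ge5$ so that \eqref{eq:tail} applies to it.

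\textbf{Convergence.} With $\beta-\alpha=2^{-4K-64}$, and $U_{K+H}\le 2^{K+H}$ since $U_p$ grows like $\rho^p$ with $\rho<2$, the second term is at most $2\cdot2^{K+13}2^{-4K-64}\to0$. The first term $2R_K\to0$ geometrically. Hence the widths tend to zero, and each enclosure is computable by the finite dynamic program over exact rationals.
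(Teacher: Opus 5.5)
Your proposal is correct and follows essentially the same route as the paper. You attach the recorded reaching prefix to the retained extremal suffix and take the witness's rationally enclosed discrepancy as the inner endpoint. You then bound the width by the suffix rounding $(\beta-\alpha)U_K$, the tail bound $R_K$ for the prefix at positions $\ge K\ge5$, and the bracket error over $n_*<U_{K+H}$. The only differences are cosmetic: you round the suffix and prefix parts separately, where the paper encloses $f(n_-)$ as a whole. Also, the reachability you flag as the main obstacle is built into the construction, since the dynamic program only starts from the $28$ reachable pairs that the breadth-first search found, each with a recorded prefix.
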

\begin{proof}
Attach the recorded prefixes to extremizing suffixes and bound the omitted
tail by $R_K$. The full argument and witness enclosures are given in
Appendix~S1.
\end{proof}

\subsection{Numerical certificates and the corrected offsets}
Exact rational optimization at cutoff $K=200$ gives the outward-rounded
enclosures below; the cutoff table and witnesses are in Appendix~S1.
The weaker six-decimal bound~\eqref{eq:mainbound} is also checked in Lean
using a separate $K=105$ certificate. The refined enclosures are
\begin{align}
 -1.049233035332&\le m\le-1.049233035329,\label{eq:minnum}\\
  1.155080224901&\le M\le 1.155080224905.\label{eq:maxnum}
\end{align}
We do not know whether these extrema are attained.

Put $k=a(n)-\lfloor cn\rfloor$. Since
$k=f(n)+\{cn\}$, \eqref{eq:mainbound} gives $-2<k<3$.
The integer $k$ belongs to $\{-1,0,1,2\}$. All four values occur:
\begin{center}
\begin{tabular}{rrrr}
\toprule
$n$ & $a(n)$ & $\lfloor cn\rfloor$ & Offset\\
\midrule
1167 & 664 & 665 & $-1$\\
2 & 1 & 1 & $0$\\
1 & 1 & 0 & $1$\\
93 & 54 & 52 & $2$\\
\bottomrule
\end{tabular}
\end{center}
The floor values are certified by the signs at consecutive integers of
$k^3-k^2n+2kn^2-n^3=n^3P(k/n)$. The sequence values at these small
indices can be computed directly from~\eqref{eq:recurrence}.
Dividing~\eqref{eq:mainbound} by $n$ proves $a(n)/n\to c$.

\section{A morphic description of the difference word}\label{sec:morphic}
Let $d(n)=a(n+1)-a(n)$ for $n\ge0$. Its first symbol is $d(0)=1$;
thus the positive-index difference word is obtained by deleting this initial
symbol. Define the canonical language
\[
 L=\{\eps\}\cup\bigl(G\cap1\{0,1\}^*\bigr),\qquad 0<1.
\]

\begin{lemma}\label[lemma]{lem:order}
Genealogical order on $L$, namely length followed by lexicographic order,
coincides with increasing represented value.
\end{lemma}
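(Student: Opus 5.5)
The plan is to reduce the statement to two facts from \cref{lem:greedy}. First, $\val$ restricted to $L$ is a bijection onto the nonnegative integers. Second, a word in $L$ of length $\ell\ge1$ has value in $[U_{\ell-1},U_\ell-1]$. Granting these, the length comparison and the lexicographic comparison can be handled separately.

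\emph{Bijection.} Words in $L$ have no leading zeros, so $\val$ is injective on $L$ by the uniqueness part of \cref{lem:greedy}. Every $n\ge1$ has a greedy representation, and stripping its leading zeros gives a word in $G\cap1\{0,1\}^*$; the empty word covers $n=0$. One point needs care: stripping leading zeros must preserve membership in $G$. This holds because the spacing condition only constrains pairs of selected positions, and the positional exception at $4$ and $0$ concerns absolute positions, which stripping does not change.

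\emph{Length comparison.} Take $w\in L$ of length $\ell\ge1$, so its leading digit sits at position $\ell-1$. Then $\val(w)\ge U_{\ell-1}$. Since $w$ is a legal word supported on positions $0,\ldots,\ell-1$, the last clause of \cref{lem:greedy} gives $\val(w)\le U_\ell-1$. Hence the intervals attached to distinct lengths are disjoint and increase with the length, and the empty word (value $0$) precedes everything. So a shorter word always has a smaller value.

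\emph{Lexicographic comparison within one length.} Let $w,w'\in L$ have the same length with $w<w'$ lexicographically. Write $w=x0s$ and $w'=x1s'$, where $x$ is their longest common prefix and the first difference occurs at position $j$. Then
\[
\val(w')-\val(w)=U_j+\val(s')-\val(s)\ge U_j-\val(s).
\]
The suffix $s$ is the tail of a legal word, so it is legal and supported on positions $0,\ldots,j-1$. By \cref{lem:greedy}, $\val(s)\le U_j-1$, and therefore $\val(w')>\val(w)$.

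Combining the two comparisons, genealogical order is a strict total order on $L$ that agrees with the order of values, and together with the bijection it coincides with increasing represented value.

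\emph{Main obstacle.} The delicate point is that legality of suffixes, and of words with leading zeros stripped, must be checked against the positional exception. Suffixes keep their absolute positions, so the allowed pair at $4$ and $0$ stays allowed, and no illegal pattern can appear inside a suffix. The bound in \cref{lem:greedy} is stated for legal words supported on positions $0,\ldots,m$ with positions fixed, so it applies to $s$ with $m=j-1$. Alternatively, the same statement can be confirmed mechanically by a finite automaton comparison of $<_{\mathrm{lex}}$ with the adder $\Add$, in keeping with the paper's certification style.
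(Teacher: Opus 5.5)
Your proposal is correct and follows the paper's proof essentially step for step: it separates lengths via $U_{\ell-1}\le\val(w)\le U_\ell-1$ from \cref{lem:greedy}, handles equal lengths at the most significant differing position using $\val(s)\le U_j-1$ for the lower suffix, and concludes with greedy existence and uniqueness. Your explicit check that suffixes and zero-stripped words stay legal despite the positional exception is a detail the paper leaves implicit, and the automaton-based alternative is not needed.
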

\begin{proof}
A canonical word of length $p+1$ has value at least $U_p$, whereas a shorter
word has value at most $U_p-1$ by \cref{lem:greedy}. For equal lengths,
consider the largest position $p$ where two words differ. The lexicographically
larger word contributes $U_p$ there; the entire lower suffix of the smaller
word contributes at most $U_p-1$. Their higher parts agree, proving strict
value order. Greedy existence and uniqueness complete the claim.
\end{proof}

The language $L$ is infinite and regular and contains $\eps$. Therefore
$(L,\{0,1\},<)$ is an abstract numeration system enumerating precisely
$\rep(0),\rep(1),\ldots$. The verified $D$-DFAO makes $d$ automatic in
this system. The Rigo--Maes theorem~\cite{RM}, explicitly restated in the
one-dimensional case of~\cite[Theorem~24]{CKR}, implies that $d$ is morphic.
The canonical language $L$ gives each integer a unique representation,
as required for this enumeration.

Our Lean proof of the concrete presentation uses the following direct
argument. For each fixed
length $m$, the legal padded words enumerate exactly the integers below $U_m$.
Their lexicographic rank equals their represented value: whenever a digit
$1$ is legal, the preceding digit-$0$ subtree has size $U_j$, where $j$ is
the remaining suffix length. Induction on the input word proves that the
corresponding iterate of the state morphism has the correct $D$-state at
every index. Fixing the length gives each integer in this range a unique
padded representation.

\subsection{A non-erasing presentation}
We give an explicit presentation. Form the product of $D$ with the partial
automaton for $G$, starting after the first digit $1$. Exactly $27$ pairs
are reachable, with distinct $D$-states $q_1,\ldots,q_{27}$.
Only $q_6$ and $q_{21}$ have no legal children. Every other state has a
nonterminal child on digit $0$. These statements are checked by exhaustive
traversal of the finite product; the script also verifies that $D$ is
defined on every legal continuation.

Introduce a marker $S$ with output $1$, corresponding to $\eps$. Define a
morphism $\mu$ by
\[
 \mu(S)=S q_1,\qquad
 \mu(q)=\text{the legal children of $q$ in digit order $0,1$}.
\]
In particular $\mu(q_6)=\mu(q_{21})=\eps$. Its fixed point
\[
 x=\mu^\omega(S)=S q_1\mu(q_1)\mu^2(q_1)\cdots
\]
enumerates the representation tree level by level. Every level contains
$10\cdots0$, so the word is infinite. Let $\tau$ output $1$ on $S$ and
the $D$-output on each $q_i$. By \cref{lem:order} and
\eqref{eq:difference}, $\tau(x)=d(0)d(1)\cdots$.

\begin{proposition}\label[proposition]{prop:morphism}
Let $I=\{S,q_1,\ldots,q_{27}\}\setminus\{q_6,q_{21}\}$. There are
non-erasing morphisms $\nu:I^*\to I^*$ and $\eta:I^*\to\{0,1\}^*$,
given in \cref{tab:morphism}, such that
\begin{equation}\label{eq:morphic}
 d(0)d(1)d(2)\cdots=\eta\bigl(\nu^\omega(S)\bigr).
\end{equation}
\end{proposition}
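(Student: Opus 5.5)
We obtain $\nu$ and $\eta$ from the tree morphism $\mu$ by removing the two leaves $q_6$ and $q_{21}$, which are the only letters $\mu$ erases. Let $\pi:\{S,q_1,\ldots,q_{27}\}^*\to I^*$ be the projection that deletes $q_6$ and $q_{21}$ and fixes every other letter. For each $s\in I$ we define $\nu(s)=\pi(\mu(s))$. This choice makes $\nu$ the restriction of $\pi\circ\mu$ to $I$.

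\textbf{Step 1: $\nu$ is non-erasing.} We have $\mu(S)=Sq_1$. Every state other than $q_6,q_{21}$ has a nonterminal child on digit $0$, so each $\mu(s)$ with $s\in I$ contains a letter of $I$. Hence $\nu(s)\neq\eps$. We also have $\nu(S)=Sq_1$, which starts with $S$, so $\nu$ is prolongable on $S$. Since $q_1\in I$ is nonterminal and generates a nonempty level at every depth, $\nu^\omega(S)$ is infinite.

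\textbf{Step 2: the key identity $\pi\circ\mu=\nu\circ\pi$.} On a letter $s\in I$ this is the definition of $\nu$. On $q_6$ and $q_{21}$ both sides are $\eps$, because $\mu$ already erases these letters. Both sides are morphisms, so the identity holds on all words. By induction, $\pi(\mu^k(S))=\nu^k(\pi(S))=\nu^k(S)$, and taking limits of these prefix chains gives $\pi(x)=\nu^\omega(S)$.

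\textbf{Step 3: building $\eta$ so the deleted outputs are recovered.} Deleting $q_6,q_{21}$ loses their $D$-outputs, so $\eta$ must reinsert them. For $s\in I$, write $\mu(s)$ as a concatenation of blocks, each block being one surviving letter followed by the maximal run of deleted letters after it. Let $s'\in I$ be the first letter of $\mu(s)$. The word $\mu(s)$ begins with a surviving letter because the digit-$0$ child is nonterminal, so no deleted letter is orphaned at the start of a level piece.

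Every deleted letter in $x$ therefore lies immediately after some surviving letter $t$ inside the same $\mu(s)$. We define $\eta(t)$ as $\tau(t)$ followed by the $\tau$-outputs of the deleted letters following $t$. The difficulty is that which deleted letters follow $t$ depends on the context $s$, not on $t$ alone.

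I would handle this by checking, by exhaustive traversal of the finite table, that the run following each surviving letter is context-independent. That is, whenever $t$ occurs in some $\mu(s)$, it is followed by the same run of deleted letters. This is plausible because a leaf arises only as a specific digit-$1$ child: the exceptional final $1$ or a forced terminal position. Its sibling and its $D$-state are determined by the parent state, and the parent is determined by the digit-$0$ sibling's state.

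If this check fails, I would first refine the alphabet by splitting letters according to their following run. That would contradict the count of $26$ letters, so I expect the check to pass. Given it, $\eta$ is non-erasing, since $|\eta(t)|\ge1$, and $\eta(\pi(x))=\tau(x)$ holds blockwise. Combining this with Step 2 and $\tau(x)=d(0)d(1)\cdots$ from \cref{lem:order} and \eqref{eq:difference} yields \eqref{eq:morphic}.

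\textbf{Main obstacle.} The main obstacle is the context-independence check in Step 3, which needs precise knowledge of where $q_6,q_{21}$ appear in the images. The remainder is formal and is finitely verified against \cref{tab:morphism}.
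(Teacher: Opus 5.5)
Your Steps 1 and 2 are correct and agree with the paper: $\nu=\pi\mu$ on $I$ is non-erasing because every letter of $I$ has a nonterminal child, and $\pi\mu=\nu\pi$ yields $\pi(x)=\nu^\omega(S)$.

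\textbf{The gap is in Step 3.} The context-independence check you rely on fails for this table. The deleted leaves occur only in $\mu(q_4)=q_5q_6$, $\mu(q_{17})=q_{20}q_{21}$, $\mu(q_{19})=q_{23}q_{21}$ and $\mu(q_{22})=q_9q_{21}$. Yet each of $q_5,q_{20},q_{23},q_9$ also occurs as a digit-$0$ child with no leaf after it: $\mu(q_9)=q_5q_{12}$, $\mu(q_{25})=q_{20}q_{27}$, $\mu(q_{26})=q_{23}q_8$ and $\mu(q_7)=q_9q_{10}$. Your heuristic that a parent is determined by its digit-$0$ child is false; for example, both $q_4$ and $q_9$ have $0$-child $q_5$.

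So your $\eta$ is not well defined on the $26$ letters. Splitting letters would give some non-erasing presentation over a larger alphabet, but not the one in the table. Even where your recipe is defined, it disagrees with the tabulated $\eta$. It gives $\eta(S)=\tau(S)=1$ and $\eta(q_1)=\tau(q_1)=0$, whereas the table has $\eta(S)=10$ and $\eta(q_1)=1$. Your final ``verification against the table'' would therefore fail.

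\textbf{The missing idea} is to recover each deleted leaf from its parent, which always survives, rather than from its left sibling. You do this by applying $\mu$ once more and using the fixed-point property. Since $x=\mu(x)$ and $\mu$ erases the letters that $\pi$ deletes, you get $x=\mu(x)=\mu(\pi(x))=h(y)$ with $h=\mu|_I$ and $y=\pi(x)=\nu^\omega(S)$.

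Now set $\eta=\tau\circ\mu$ on $I$, so that $\eta(q)$ lists the $D$-outputs of the legal children of $q$. Then $\tau(x)=\eta(y)$, with no context dependence at all. This $\eta$ is non-erasing because every letter of $I$ has at least one child. It is exactly the tabulated morphism: for instance $\eta(S)=\tau(S)\tau(q_1)=10$, $\eta(q_4)=\tau(q_5)\tau(q_6)=10$, and $\eta(q_{24})=\tau(q_{26})\tau(q_1)=00$. Applying $\tau$ to $x=h(y)$ together with $\tau(x)=d(0)d(1)\cdots$ then gives the identity directly.
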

\begin{proof}
Let $\pi$ delete $q_6,q_{21}$ and fix every other letter. On $I$, put
$\nu=\pi\mu$, $h=\mu$, and $\eta=\tau h$.
Since every nonterminal state has a nonterminal child, both $\nu$ and
$\eta$ are non-erasing. The relation $\pi\mu=\nu\pi$ gives
$y=\pi(x)=\nu^\omega(S)$. Because $\mu$ annihilates the deleted letters,
\[
 x=\mu(x)=\mu(\pi(x))=h(y).
\]
Applying $\tau$ proves~\eqref{eq:morphic}.
\end{proof}

The output morphism $\eta$ has images of length one or two; it is not a
letter coding on this same alphabet. We do not know whether this presentation
is minimal, nor how it relates to the six-letter substitution found
experimentally by the first author.

\begin{table}[tb]
\centering\small
\input{generated/morphism-table.tex}
\caption{The non-erasing substitution and output morphism in
\cref{prop:morphism}. The initial letter is $S$.}
\label{tab:morphism}
\end{table}

\begin{corollary}\label[corollary]{cor:word}
The frequency of $1$ in $d$ is $c$, the word is not ultimately periodic,
and its least balance constant is $4$.
\end{corollary}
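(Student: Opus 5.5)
The frequency claim comes straight from the discrepancy bound. Since $d(n)=a(n+1)-a(n)\in\{0,1\}$ by \eqref{eq:difference}, the number of $1$'s among $d(0),\dots,d(N-1)$ equals $a(N)-a(0)=a(N)$. By \eqref{eq:mainbound}, $a(N)=cN+O(1)$, so the frequency of $1$ exists and equals $c$.

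Aperiodicity follows because $c$ is irrational. The cubic $P$ has no rational root: $\pm1$ fail, so by the rational root test the only candidates are excluded. If $d$ were ultimately periodic with period $T$ and $s$ ones per period, its frequency would be $s/T\in\mathbb{Q}$. That contradicts the frequency $c$ just computed.

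For the balance constant, I first convert balance into a statement about differences of $f$. The number of $1$'s in the factor $d(i)\cdots d(i+\ell-1)$ is $a(i+\ell)-a(i)=c\ell+f(i+\ell)-f(i)$. Two factors of length $\ell$ starting at $i$ and $j$ therefore differ in their $1$-counts by $f(i+\ell)-f(i)-f(j+\ell)+f(j)$. This is an integer of absolute value less than $2(M-m)$. From \eqref{eq:minnum} and \eqref{eq:maxnum}, $M-m<2.2045$, so $2(M-m)<4.41$. Hence every such difference is at most $4$, and the balance constant is at most $4$.

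It remains to show that $4$ is attained. I would do this with an explicit witness: find $\ell,i,j$ for which $a(i+\ell)-a(i)-a(j+\ell)+a(j)=4$. The heuristic for where to look is to take $i+\ell$ and $j$ near maximizers of $f$ and $i$ and $j+\ell$ near minimizers. Then the sum is close to $2(M-m)\approx4.4$, and being an integer it should equal $4$. The witness indices $n$ from Appendix~S1 with $f(n)$ near $M$ and near $m$ are natural starting points, but they are typically not aligned so that $i+\ell$ and $j+\ell$ land in the right places simultaneously.

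The main obstacle is exactly this realization: finding four positions satisfying the two-pair alignment with all four values of $f$ close to their extremes. I would try two routes. The first is a direct computer search over indices up to some bound, computing $a$ from \eqref{eq:recurrence} and checking a candidate pair of factors; a single witness suffices. The second, if the search fails at small range, uses the synchronized framework. One builds the relation $\exists i,j,\ell\,[a(i+\ell)-a(i)-a(j+\ell)+a(j)=4]$ from $\Graph$ and $\Add$, checks that it is nonempty, and reads off an accepted tuple. The same automaton with $5$ in place of $4$ gives an independent check of the upper bound, though the analytic bound already settles it.
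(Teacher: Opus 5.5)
Your arguments for the frequency, for non-periodicity (via $P(1)=1$ and $P(-1)=-5$), and for the upper bound $4$ match the paper's. Two small points on the upper bound. The paper uses the six-decimal bound \eqref{eq:mainbound} from the main theorem: its width is $2.204315$, so counts differ by less than $4.408630$. That bound suffices, and it avoids relying on the narrower enclosures \eqref{eq:minnum}--\eqref{eq:maxnum}, which come from a separate non-formalized computation. Also, since $m$ and $M$ need not be attained, write ``at most $2(M-m)$'' rather than ``less than''; this is harmless here.

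The genuine gap is the sharpness half. That $4$ is the \emph{least} balance constant is an existence claim, and your proposal only describes a search without producing a witness. Your observation that near-extremal values would push the integer $f(i+\ell)-f(i)-f(j+\ell)+f(j)$ toward $4.4$ is a heuristic, not an argument. The discrepancy bounds give only an upper estimate. Nothing in them forces four positions, with the same $\ell$ for both pairs, to be simultaneously close enough to the extremes to reach $4$. A priori the maximal difference could be $3$.

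What is needed is a concrete certificate, and the paper supplies one. Direct evaluation gives $a(135)=76$, $a(207)=119$, $a(214)=123$, $a(286)=162$. So the length-$72$ factors starting at $135$ and $214$ contain $43$ and $39$ ones. Here $f(207)\approx1.043$, $f(214)\approx1.054$, $f(135)\approx-0.928$ and $f(286)\approx-0.974$, which confirms your heuristic about where to look.

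Because the witness lies below $300$, your first route (evaluating the recurrence and searching a small range) finds it at once; the synchronized-automaton construction is unnecessary. Until such a witness is exhibited and checked, the proposal establishes only that the balance constant is at most $4$.
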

\begin{proof}
There are $a(N)$ ones in the first $N$ symbols, so their frequency tends
to $c$. The rational-root test shows that $P$ has no rational root;
an ultimately periodic binary word would have rational letter frequency.
For a length-$\ell$ factor beginning at $i$, the number of ones is
\[
 a(i+\ell)-a(i)=c\ell+f(i+\ell)-f(i).
\]
The bounds in~\eqref{eq:mainbound} have width $2.204315$. Counts of ones
in two equal-length factors thus differ by less than $4.408630$, hence by
at most $4$ since the difference is integral. The same holds for zeros.
For sharpness, direct evaluation gives
$a(135)=76$, $a(207)=119$, $a(214)=123$, and $a(286)=162$.
The length-$72$ factors starting at $135$ and $214$ therefore contain
$43$ and $39$ ones, respectively. Their difference is $4$, ruling out
every smaller balance constant. These evaluations are also kernel checked.
\end{proof}

\section{Verification and further questions}\label{sec:verification}
The finite-state assertions are proved by exhaustive language inclusion,
using integer transition tables. Two separately written Python
implementations reproduce the recurrence checks, and Walnut independently
reconstructs six quantified relations. The accompanying Lean development
checks the entire identification chain, the six-decimal global bound,
the exact offset set, the concrete morphic identity, the frequency,
nonperiodicity, and the least balance constant $4$. Its final axiom closures
contain only \code{propext}, \code{Classical.choice}, and \code{Quot.sound}.

Lean also checks a computable extrema algorithm: exhaustive search over
$n<U_{K+16}$ with rational root brackets and a proved geometric error.
The faster suffix optimizer used for
\eqref{eq:minnum}--\eqref{eq:maxnum} is a separate exact computation;
those narrower decimal enclosures are not Lean theorems. Appendix~S2
records verification scope and reproducible build instructions.

Open questions include the relation with the original six-letter
substitution, the minimality of the morphic alphabet, the attainment of the
global extrema and their relation to asymptotic extrema, and the dynamical
description of the difference word, for instance as a coding of a toral
rotation.

\section*{Contributions and use of AI systems}
\begingroup\small
B.C. identified the Padovan description of the sequence as a two-position
shift with a finite-state correction, supplied the correction automata, and
wrote an independent verification of the automata proof. W.Z. and H.M. found
the counterexample to the original offset conjecture, built the certified
adder, the identification proof and the discrepancy bounds, and developed
the Lean and Walnut verifications.

The first author used Claude (Anthropic)
as an experimental laboratory for computations, including verification code,
and for cross-checks against the literature. H.M. and W.Z. used OpenAI models,
primarily GPT-5.6 and GPT-6 Astra, through Codex to assist with counterexample
searches, exploration of proof strategies, automata construction, and the
writing, debugging, and review of Python code, Lean proofs, and Walnut
specifications. These tools also assisted with cross-checking implementations
and computational results, comparing the arguments with the literature,
and drafting and revising the manuscript.

Model-generated suggestions and code were subjected to the formal and
computational checks described in this paper. The Lean kernel checked the
formalized theorem statements and proofs; Appendix~S2 records their scope
and axiom closures.
All authors retain responsibility for the mathematical claims, verification
code, and final text.
\endgroup

\appendix
\renewcommand{\thesection}{S\arabic{section}}
\renewcommand{\thetable}{S\arabic{table}}
\renewcommand{\theHtable}{appendix.S\arabic{table}}
\setcounter{table}{0}
\numberwithin{equation}{section}
\input{appendix-body.tex}

\end{document}

%% file: generated/morphism-table.tex
\begin{tabular}{lll@{\qquad\qquad}lll}
\toprule
$q$ & $\nu(q)$ & $\eta(q)$ & $q$ & $\nu(q)$ & $\eta(q)$\\
\midrule
$S$ & $S\,q_{1}$ & \texttt{10} & $q_{14}$ & $q_{17}$ & \texttt{0}\\
$q_{1}$ & $q_{2}$ & \texttt{1} & $q_{15}$ & $q_{18}$ & \texttt{1}\\
$q_{2}$ & $q_{3}$ & \texttt{0} & $q_{16}$ & $q_{19}$ & \texttt{0}\\
$q_{3}$ & $q_{4}$ & \texttt{1} & $q_{17}$ & $q_{20}$ & \texttt{01}\\
$q_{4}$ & $q_{5}$ & \texttt{10} & $q_{18}$ & $q_{22}$ & \texttt{0}\\
$q_{5}$ & $q_{7}\,q_{8}$ & \texttt{01} & $q_{19}$ & $q_{23}$ & \texttt{11}\\
$q_{7}$ & $q_{9}\,q_{10}$ & \texttt{11} & $q_{20}$ & $q_{24}\,q_{1}$ & \texttt{10}\\
$q_{8}$ & $q_{11}$ & \texttt{0} & $q_{22}$ & $q_{9}$ & \texttt{11}\\
$q_{9}$ & $q_{5}\,q_{12}$ & \texttt{10} & $q_{23}$ & $q_{25}\,q_{10}$ & \texttt{01}\\
$q_{10}$ & $q_{13}$ & \texttt{0} & $q_{24}$ & $q_{26}\,q_{1}$ & \texttt{00}\\
$q_{11}$ & $q_{14}$ & \texttt{1} & $q_{25}$ & $q_{20}\,q_{27}$ & \texttt{01}\\
$q_{12}$ & $q_{15}$ & \texttt{1} & $q_{26}$ & $q_{23}\,q_{8}$ & \texttt{11}\\
$q_{13}$ & $q_{16}$ & \texttt{1} & $q_{27}$ & $q_{15}$ & \texttt{1}\\
\bottomrule
\end{tabular}

%% file: appendix-body.tex
\section{Effective extrema: proof and numerical certificates}
Write $f(n)=a(n)-cn$, $m=\inf_{n\ge0}f(n)$, and
$M=\sup_{n\ge0}f(n)$. For $K\ge5$, the sparse-tail bound is
\[
 R_K=\frac{(5/16)(7/8)^{K-3}}{1-(7/8)^5}.
\]
Let $\alpha<c<\beta$ be a rational bracket. A legal suffix of length $K$,
including the correction output, contributes $A-cB$, where
$A\in\mathbb Z$ and $0\le B<U_K$. Optimize over suffix paths from the
$28$ reachable correction/greedy state pairs:
\[
 L_K=\min(A-\beta B),\qquad V_K=\max(A-\alpha B).
\]
Each pair has a recorded reaching prefix of length at most $H=13$.
The global bound is $L_K-R_K<f(n)<V_K+R_K$.

\subsection{Proof of the effective-extrema proposition}
Join a minimizing suffix path to its recorded reaching prefix, producing
a legal word for an integer $n_-$. Likewise obtain $n_+$ from a maximizing
path. Each concatenation is legal because its prefix reaches the initial
state of the chosen suffix. Since their lengths are at most $K+H$, both integers
are smaller than $U_{K+H}$ by the greedy-language bound in the main text.

Use the representation formula and root bracket to enclose their
discrepancies. If $u_-$ is the upper endpoint for $f(n_-)$ and $\ell_+$
is the lower endpoint for $f(n_+)$, then
\begin{equation}
 L_K-R_K\le m\le u_-,\qquad
 \ell_+\le M\le V_K+R_K.
\end{equation}
The chosen minimizing suffix has true value at most
$L_K+(\beta-\alpha)U_K$. Its high-prefix contribution is less than $R_K$
in absolute value, and enclosing its full discrepancy adds at most
$(\beta-\alpha)U_{K+H}$. Thus each enclosure has width at most
\begin{equation}
 2R_K+(\beta-\alpha)(U_K+U_{K+H}).
\end{equation}
The maximum is symmetric. Exact bisection of
$P(t)=t^3-t^2+2t-1$ on $[0,1]$ supplies a bracket of width
$2^{-4K-64}$. Since $U_j<2^{j+1}$, the second term tends to zero;
$R_K$ tends to zero geometrically as well. This proves the proposition.

\subsection{Exact computation and witnesses}
Table~\ref{tab:bounds} gives rational enclosures of the global extrema
with outward-rounded decimal endpoints.
\begin{table}[htbp]
\centering\small
\input{generated/bounds-table.tex}
\caption{Global discrepancy bounds. The last column bounds the larger
exact extrema-enclosure width, rounded upwards.}
\label{tab:bounds}
\end{table}

For $K=200$, the root bracket uses $864$ exact bisection steps.
Representative witnesses are
\begin{align*}
 n_-&=27957586325095713833899543,&
 a(n_-)&=15931359127095736187627998,\\
 n_+&=9231177514959072345795515,&
 a(n_+)&=5260296881378963990671881.
\end{align*}
The representation formula evaluates these large values directly from
their greedy representations. Exact discrepancy brackets
and greedy words are recorded in \path{develop/automata-results/extrema.json}.
For suffix lengths $5,8,12$, exhaustive enumeration agrees with the
dynamic program. All eight stored witnesses at four cutoffs are accepted
by the synchronized graph, and the two small witnesses also agree with
direct recurrence evaluation.

\section{Verification scope and reproduction}
\subsection{Finite-state and independent checks}
Products implement intersection, subset construction implements
existential projection, and inclusion is decided by exhaustive reachability
against the complement of the target. Undefined transitions reject.
Each inclusion either returns a counterexample or exhausts the finite
search space.

Two separately written Python implementations reproduce the principal
recurrence checks. The second imports the literal correction table,
but none of the first implementation's automata or transformation code.
At carry cutoffs $8$ and $10$ it passes adder totality, graph totality,
the recurrence and value bounds. The morphic presentation and representation
order were also compared with the literal recurrence through $100{,}000$
terms; this is a diagnostic, not an all-index proof.

Walnut~7.1.0 imports the base adder, candidate graph, greedy domains,
constants and difference relation, then reconstructs all intermediate
quantified relations. Adder totality, graph totality, value bounds,
positivity, the recurrence and the difference identity all return
\code{TRUE}. Integer interpretation still rests on the carry proof.

\subsection{Lean scope}
The Lean development checks canonical greedy representations, recognition,
carry and delay semantics, finite-language inclusion and projection,
totality, and identification with the literal nested recurrence. It also
proves the exact four offsets, slope limit, concrete $26$-letter morphic
identity, irrational frequency, nonperiodicity, and exact balance constant
$4$. In \path{develop/lean/BalanceSharpness.lean}, the two length-$72$
factors have $43$ and $39$ ones. The theorem \code{exact\_balance} states
that a natural number $C$ is a uniform balance bound if and only if $4\le C$.

A real quadratic contraction and sparse tail supply the analytic bound.
The $K=105$ Bellman certificate with a $160$-bit dyadic root bracket
proves $-1.049234<f(n)<1.155081$ in the kernel.
The formal extrema algorithm searches $n<U_{K+16}$ and uses a bracket of
width $2^{-4K-64}$. A prefix-compression lemma bounds the discrepancy
outside this window, proving rational enclosures and convergence.
This exhaustive algorithm establishes computability; it is distinct from
the faster suffix optimizer used for the narrow $K=200$ decimals.

Final theorem axiom closures contain only \code{propext},
\code{Classical.choice}, and \code{Quot.sound}. No admitted proofs,
extra axioms, or native decision axioms are used. Kernel reduction checks
the generated finite certificates. The narrow $K=200$ decimals and
the original six-letter identity are not asserted as Lean theorems.

\newpage
\subsection{Reproduction commands}
The reproducibility package is available at
\url{https://github.com/the-omega-institute/a076502-padovan}.
Version v1.0.1 is archived at
\url{https://doi.org/10.5281/zenodo.22979217}.

From the package's \path{develop/} directory, standard-library Python~3
reconstructs the finite-state proof and certificates:
\begin{verbatim}
python3 automata_proof.py --bound 8
python3 discrepancy_certificate.py
python3 morphic_certificate.py
python3 extrema_certificate.py
python3 paper_checks.py
\end{verbatim}
The original literal tables and independent verifier are retained,
unmodified, in \path{correspondence/attachments/}.

Reports in \path{develop/automata-results/} include
\path{report.json}, \path{morphic-presentation.json}, \path{extrema.json},
\path{paper-checks.json}, and \path{walnut.json}.
The optional million-term diagnostic is \code{independent\_checks.py}.
Walnut can be replayed using \code{walnut\_check.py} with Java and the
pinned jar described in \path{develop/walnut/README.md}.

The Lean toolchain is $4.33.0$, with Mathlib pinned to
\begin{quote}\small\ttfamily
db584cd6d46c92f209a44c0f1c829460d327499d.
\end{quote}
With Elan, Git, Python~3 and network access available, run from the
extracted research package root:
\begin{verbatim}
python3 develop/clean_ci.py
\end{verbatim}
This command rebuilds the Lean development in a fresh Lake project
with the pinned dependencies and records source hashes and final axiom
closures in \path{develop/ci-results/}. The README of the package gives
the details.

\clearpage
\section{The finite-state tables}
Table~\ref{tab:dfao} specifies both input transition functions and outputs.
The initial state is $0$ for both automata. A dash denotes an undefined
transition. The transition functions are not identical: state $22$ has
different digit-$1$ successors in $E$ and $D$. Representations are always
restricted to the greedy language.
\begin{table}[htbp]
\centering\small
\input{generated/dfao-table.tex}
\caption{The $E$- and $D$-DFAOs used in the main text. Columns $0$ and $1$
give successor states; ``out'' gives the output.}
\label{tab:dfao}
\end{table}

%% file: generated/bounds-table.tex
\begin{tabular}{rrrr}
\toprule
$K$ & Lower endpoint & Upper endpoint & Extrema width\\
\midrule
20 & -1.086486290247 & 1.198282591609 & 0.079247100790\\
70 & -1.049288300721 & 1.155139949837 & 0.000096687582\\
120 & -1.049233123804 & 1.155080305728 & 0.000000115468\\
200 & -1.049233035332 & 1.155080224905 & 0.000000000003\\
\bottomrule
\end{tabular}

%% file: generated/dfao-table.tex
\begin{tabular}{r rrr rrr}
\toprule
 & \multicolumn{3}{c}{$E$} & \multicolumn{3}{c}{$D$}\\
\cmidrule(lr){2-4}\cmidrule(lr){5-7}
State & $0$ & $1$ & out & $0$ & $1$ & out\\
\midrule
0 & 0 & 1 & 0 & 0 & 1 & 1\\
1 & 2 & -- & 1 & 2 & -- & 0\\
2 & 3 & -- & 1 & 3 & -- & 1\\
3 & 4 & -- & 1 & 4 & -- & 0\\
4 & 5 & 6 & 0 & 5 & 6 & 1\\
5 & 7 & 8 & 0 & 7 & 8 & 1\\
6 & -- & -- & 1 & -- & -- & 0\\
7 & 9 & 10 & 0 & 9 & 10 & 0\\
8 & 11 & -- & 0 & 11 & -- & 1\\
9 & 5 & 12 & 0 & 5 & 12 & 1\\
10 & 13 & -- & 1 & 13 & -- & 1\\
11 & 14 & -- & 2 & 14 & -- & 0\\
12 & 15 & -- & 1 & 15 & -- & 0\\
13 & 16 & -- & 1 & 16 & -- & 0\\
14 & 17 & -- & 0 & 17 & -- & 1\\
15 & 18 & -- & 1 & 18 & -- & 1\\
16 & 19 & -- & 1 & 19 & -- & 1\\
17 & 20 & 21 & 1 & 20 & 21 & 0\\
18 & 22 & -- & 1 & 22 & -- & 1\\
19 & 23 & 21 & 1 & 23 & 21 & 0\\
20 & 24 & 1 & 0 & 24 & 1 & 0\\
21 & -- & -- & 0 & -- & -- & 1\\
22 & 9 & 6 & 0 & 9 & 21 & 0\\
23 & 25 & 10 & -1 & 25 & 10 & 1\\
24 & 26 & 1 & 0 & 26 & 1 & 1\\
25 & 20 & 27 & 1 & 20 & 27 & 0\\
26 & 23 & 8 & 1 & 23 & 8 & 0\\
27 & 15 & -- & 0 & 15 & -- & 1\\
\bottomrule
\end{tabular}